\documentclass[12pt]{amsart}
\usepackage{amssymb,graphicx,mathtools,amsthm,microtype,mathrsfs,crossreftools,color,amsmath}

\definecolor{darkblue}{rgb}{0.1,0,0.5}
\usepackage[linktocpage=true,colorlinks,linkcolor=darkblue,anchorcolor=green,citecolor=darkblue]{hyperref}

\usepackage{thmtools}
\usepackage[noabbrev,capitalise]{cleveref}

\theoremstyle{theorem}
\newtheorem{theorem}{Theorem}

\newtheorem{lemma}{Lemma}

\theoremstyle{remark}
\newtheorem{remark}[theorem]{Remark}
\AtEndEnvironment{remark}{\hfill$\blacktriangleleft$}

\numberwithin{equation}{section}

\newcommand{\RR}{\mathbb{R}}

\newcommand{\ZZ}{\mathbb{Z}}

\newcommand{\cM}{\mathcal{M}}

\newcommand{\cO}{\mathcal{O}}

\begin{document}

\title[Cycling projections and periodic billiard orbits]{Persistence of periodic billiard orbits under domain deformation} 
\author{Samuel Everett}
\address{University of Chicago}
\email{same@uchicago.edu}

\maketitle

\begin{abstract}
We prove that if a polygon admits a periodic billiard orbit satisfying a certain combinatorial criterion, then there are paths of polygons in parameter space for which every polygon in the path admits a periodic billiard orbit of the same type.
\end{abstract}

\tableofcontents

\section{Introduction}

Consider the uniform motion of a point-mass (billiard) in a simply connected polygonal plane domain $Q\subset \RR^2$, with specular reflection at the boundary. If a trajectory is incident to a vertex of the polygon, its continuation is undefined.
Although such polygonal billiard systems are simple to describe, basic questions are often very difficult to answer.
For instance, it remains unknown whether every polygon admits a periodic billiard trajectory \cite{gutkin2012billiard,schwartz2021survey}.
If a polygon's interior angles are rational multiples of $\pi$, then a result of Masur \cite{masur1986closed} tells us that the polygon admits a periodic billiard trajectory; in fact periodic trajectories are dense in the phase space \cite{boshernitzan1998periodic}. The existence of periodic billiard trajectories in irrational polygons is  best understood (although not completely) in the case of triangles, where there are many positive results \cite{holt1993periodic,gal2003periodic,hooper2007periodic,troubetzkoy2005periodic,schwartz2006obtuse,schwartz2009obtuse,halbeisen2000periodic,vorobets1992periodic}.

The object of this paper is to identify conditions under which a periodic billiard trajectory in a polygon persists (in terms of its type) under deformation of the polygon. Our main theorem is a special case of a result of Troubetzkoy \cite{troub2004recurrence}. The merit of this paper is to provide a new proof and techniques.

Let $\cM_m$ label the parameter space of simply connected $m$-gons of unit area, inheriting a topology from the inclusion $\cM_m\subset (\RR^2)^m$. The \emph{orbit type} of a billiard trajectory $\Gamma$ is the sequence of edges $\{w_i\}$ of $Q$ the trajectory is incident to.

\begin{theorem}\label{thm:main}
Let $Q \in \cM_m$ be a polygon admitting a periodic billiard trajectory $\Gamma$ of orbit type $w = w_{a_1}\cdots w_{a_n}$, where $a_i \in \{1,\dots,m\}$.
Suppose there is an $i=1,\dots, n$ such that the edges $w_{a_i}$ and $w_{a_{i+1}}$ each occur only once in the word $w$ (where $w_{a_{n+1}} = w_{a_1}$).
Then, for every neighborhood $U \subset \cM_m$ of $Q$, there are distinct polygons $P \in U$ and $P' \in \cM_m$ forming the endpoints of a path $\alpha:[0,1]\rightarrow \cM_m$, such that every polygon $R \in \alpha\bigl((0,1)\bigr)\subset \cM_m$ admits a periodic billiard trajectory of orbit type $w$.
\end{theorem}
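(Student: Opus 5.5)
The plan is to deform $Q$ locally near the segment of $\Gamma$ that runs from the edge $w_{a_i}$ to the edge $w_{a_{i+1}}$, without touching the rest of the trajectory. Because each of these two edges is hit exactly once per period, changing the supporting lines of $w_{a_i}$ and $w_{a_{i+1}}$ affects only two reflections of $\Gamma$. Every other reflection of $\Gamma$ happens on an edge whose supporting line is left fixed, so it is unaffected.

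\textbf{Setup.} Let $p\in w_{a_i}$ and $q\in w_{a_{i+1}}$ be the consecutive hit points of $\Gamma$. Let $\ell_{\mathrm{in}}$ be the line carrying the segment of $\Gamma$ that arrives at $p$, with direction $u_{\mathrm{in}}$. Let $\ell_{\mathrm{out}}$ be the line carrying the segment that leaves $q$, with direction $u_{\mathrm{out}}$.

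\textbf{Construction.} For a parameter $t$ in a small interval, choose points $p_t\in\ell_{\mathrm{in}}$ and $q_t\in\ell_{\mathrm{out}}$ depending continuously on $t$, with $p_0=p$ and $q_0=q$. For example, slide $p_t=p+t\,u_{\mathrm{in}}$ and keep $q_t=q$. Set $v_t=(q_t-p_t)/|q_t-p_t|$.
\begin{itemize}
\item Define the new line $L_t$ of edge $a_i$ to be the line through $p_t$ with normal parallel to $v_t-u_{\mathrm{in}}$. This is exactly the mirror whose specular reflection turns $u_{\mathrm{in}}$ into $v_t$.
\item Define the new line $L'_t$ of edge $a_{i+1}$ to be the line through $q_t$ with normal parallel to $u_{\mathrm{out}}-v_t$.
\end{itemize}
All other edge lines of $Q$ stay fixed. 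The vertices of the new polygon $Q_t$ are the intersections of consecutive edge lines, so they depend continuously on $t$; this works the same way when $w_{a_i}$ and $w_{a_{i+1}}$ are adjacent in $Q$. The closed polygonal path $\Gamma_t$ agrees with $\Gamma$ except that the portion $\cdots\to p\to q\to\cdots$ is replaced by $\cdots\to p_t\to q_t\to\cdots$. By construction, $\Gamma_t$ obeys the reflection law at every hit point of $Q_t$.

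\textbf{Open conditions and normalization.} For small $|t|$ the following conditions all hold, since each is open and is satisfied at $t=0$:
\begin{itemize}
\item $Q_t$ is simple;
\item every hit point of $\Gamma_t$ lies in the interior of the corresponding edge;
\item $\Gamma_t$ lies inside $Q_t$;
\item $\Gamma_t$ meets $\partial Q_t$ only at the prescribed points, so its orbit type is exactly $w$.
\end{itemize}
Then rescale $Q_t$ to unit area. Scaling preserves billiard trajectories, so this gives a continuous path $t\mapsto \widehat Q_t\in\cM_m$ through $Q$. Every polygon on the path admits a periodic orbit of type $w$.

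\textbf{Endpoints.} Restrict $t$ to $[-\epsilon,\epsilon]$ with $\epsilon$ so small that $\widehat Q_t\in U$. Take $\alpha(s)=\widehat Q_{-\epsilon+2\epsilon s}$, with $P=\widehat Q_{-\epsilon}$ and $P'=\widehat Q_{\epsilon}$. It remains to check that $P\neq P'$ and that the path is nonconstant. For this I would show that the interior angle of $Q_t$ at a vertex of edge $a_i$ changes with $t$. This angle is determined by the direction of $L_t$, which rotates as $v_t$ rotates. Sliding $p_t$ along $\ell_{\mathrm{in}}$ while $q_t$ is fixed rotates $v_t$ strictly, because $\ell_{\mathrm{in}}$ is not parallel to the segment $pq$ (there is a genuine reflection at $p$). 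Since angles are similarity invariants, the polygons $\widehat Q_t$ are pairwise non-similar, hence distinct.

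\textbf{Main obstacle.} I expect the hardest step to be the open-condition verification. It must be made rigorous that no new vertex of $Q_t$ drifts onto $\Gamma_t$. It must also be shown that the altered segment $p_tq_t$ does not cross some other edge of $Q_t$, which would add a letter to the orbit type. I would handle both with a uniform-distance argument: at $t=0$ there is a positive distance between $\Gamma$ and the vertex set of $Q$. There is also a positive distance between each segment of $\Gamma$ and the edges it does not hit (away from its endpoints). Both distances vary continuously in $t$, so they stay positive for small $|t|$.
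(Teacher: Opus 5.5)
Your proof is correct, and it takes a genuinely different, more elementary route than the paper.

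\textbf{What the paper does.} It encodes $\Gamma$ as a periodic orbit of a cycling projection map $T_n$ on the extended edge lines. It then rotates $L_{w_{a_1}}$ about $z_1$ so that the induced map $\hat T_n$ becomes a strict contraction ($C<1$), and varies the single projection angle $\theta_n$. The Banach fixed point theorem and \cref{lem:contFP} then give a continuous family of closed curves $\xi_{\theta_n'}$. These obey the reflection law at every bounce point except the two on the once-visited edges. Finally those two edges are rotated to restore the reflection law there.

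\textbf{What you do differently.} You share only this last step. You write the one-parameter family of closed curves down explicitly: slide one bounce point along its incoming line and leave every other bounce point in place. You also give the repaired mirrors in closed form, via the normals $v_t-u_{\mathrm{in}}$ and $u_{\mathrm{out}}-v_t$.

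\textbf{What each route buys.} Yours is self-contained and gives a slightly stronger conclusion: your path passes through $Q$ itself and stays inside $U$. You also handle explicitly the unit-area normalization, the open conditions that keep the orbit type equal to $w$, and the distinctness of the endpoints; the paper treats these briefly or not at all. The paper's route mainly serves as a showcase of the cycling-projection framework, since the perturbed orbit appears as the globally attracting periodic orbit of a contraction that depends continuously on parameters. The contraction is not actually needed for this statement.

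\textbf{One justification to tighten.} In the nonconstancy step you need $q\notin\ell_{\mathrm{in}}$, i.e.\ that the hit at $p$ is not perpendicular. ``A genuine reflection'' does not give this, since a perpendicular bounce is also a genuine reflection. For $n\ge 3$ it does follow from the hypothesis: a perpendicular hit at $p$ makes $\Gamma$ retrace itself. That forces $a_{i-1}=a_{i+1}$ at two distinct positions of $w$, contradicting that $w_{a_{i+1}}$ occurs only once.

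The degenerate case $n=2$ (a perpendicular orbit between two parallel edges) is allowed by the literal hypothesis. There $v_t$ is constant and your construction merely translates edge $a_i$. The conclusion still holds, but distinctness of $P$ and $P'$ has to be read off from vertex positions rather than from a changing angle.
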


The regular polygons provide an easy class of examples satisfying the hypothesis of \cref{thm:main}; take the periodic billiard trajectories constructed by joining the midpoints of adjacent edges.

This paper proceeds as follows. In the next section we will introduce the concept of ``cycling projection maps" which are central to the proof of \cref{thm:main}.
Then in \cref{secProofs} we will prove \cref{thm:main}.

\section{Cycling projection maps}\label{secPrelim}

In this section we will review the concept of cycling projection maps introduced in \cite{everett2025geometric}. We develop only the properties necessary to prove \cref{thm:main}.

\subsection{Cycling projection maps}

Let $L_i, L_j \subset \RR^2$ label distinct lines. For every $x \in L_i$ and $\theta \in (0, \pi/2)$, there are two lines $\mathscr{L}_0$ and $\mathscr{L}_1 \subset \RR^2$ such that
\begin{enumerate}
\item[(i)] $\{x\} = \mathscr{L}_0 \cap \mathscr{L}_1$, and
\item[(ii)] $\mathscr{L}_0$ and $\mathscr{L}_1$ intersect with $L_j$ at points $p_0$ and $p_1$, respectively, with acute intersection angle $\theta$.
\end{enumerate}
Refer to \cref{figProjection} for a visual aid.

\begin{figure}
    \centering
    \includegraphics[scale=0.23]{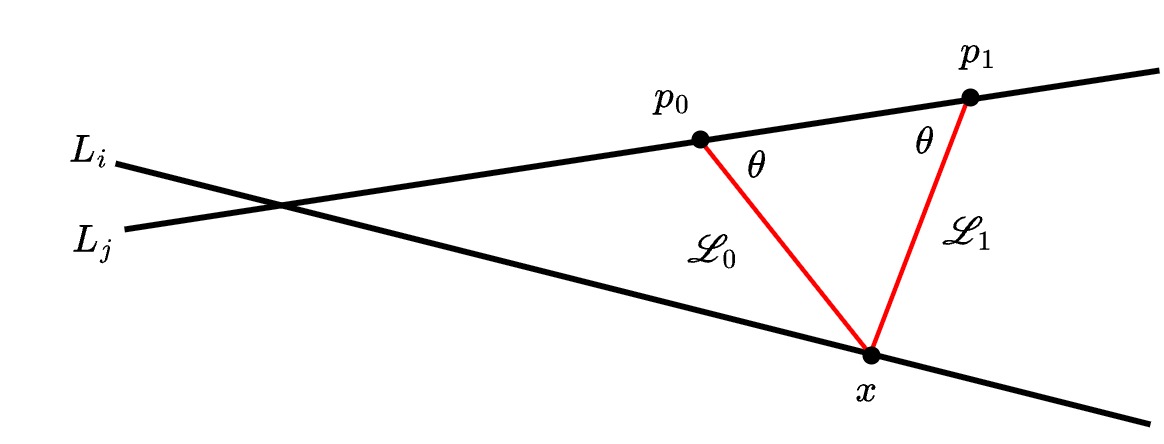}
    \caption{Orientation 0 and 1, angle $\theta$ projections of $x$ onto $L_j$.}
    \label{figProjection}
\end{figure}

We call $p_0, p_1 \in L_j$ the \emph{orientation 0 and 1, angle $\theta$ projections of $x$ onto $L_j$}. We fix the following orientation convention: the orientation $0$ and $1$ projection points are always the ``left" and ``right" points taken from $x$. In other words, the orientation $0$ projection point $p_0$ is always the first projection point encountered when sweeping clockwise about $x$, initially pointing at $p_1$.

Let $X_m \subset \RR^2$ denote the union of $m\geq 3$ nonconcurrent lines in $\RR^2$, labeled $L_1,\dots,L_m$.
Let $o \in \{0,1\}$, $\theta \in (0,\pi/2)$.
An orientation $o$ angle $\theta$ projection
\[
r(o, \theta, L_i):X_m \rightarrow L_i
\]
is a mapping carrying any $x \in X_m$ to its orientation $o$, angle $\theta$ projection on $L_i$. If $x \in L_i$, then $r(o, \theta, L_i)(x) = x$.

For fixed parameters $o, \theta, L_i$, we call $r(o, \theta, L_i)$ a \emph{projection rule}. Interesting behavior arises when many of these projection rules are composed in a fixed, cycling order.

A \emph{projection rule sequence} associated with a space $X_m$ is a sequence of $n \geq m\geq 3$ projection rules, denoted $\{r_i\}_{i=1}^n\coloneqq \{r_i(o_i,\theta_i, L_{a_i})\}_{i=1}^n$, $a_i \in \{1,\dots,m\}$, satisfying the property that consecutive projection rules in the sequence do not project onto the same line, including the first and last.

We will cyclically iterate the projection rules of a sequence $\{r_i\}_{i=1}^n$ in order to obtain a dynamical system.
A \emph{cycling projection map} $T_n:X_m\rightarrow X_m$ is defined to be a cycling composition of $n\geq 3$ projection rules in an associated defining projection rule sequence $\{r_i\}_{i=1}^n$. More precisely, for $x \in X_m$, define iteration of $T_n$ by:
\[
T_n(T^{n+1}_n(x)) = T^{n+2}_n(x) = r_2(r_1(r_n(\dots r_2(r_1(x))))).
\]
See \cref{figExampleIteration} for a demonstration of what iterating a cycling projection map looks like.

\begin{figure}
    \centering
    \includegraphics[scale=0.17]{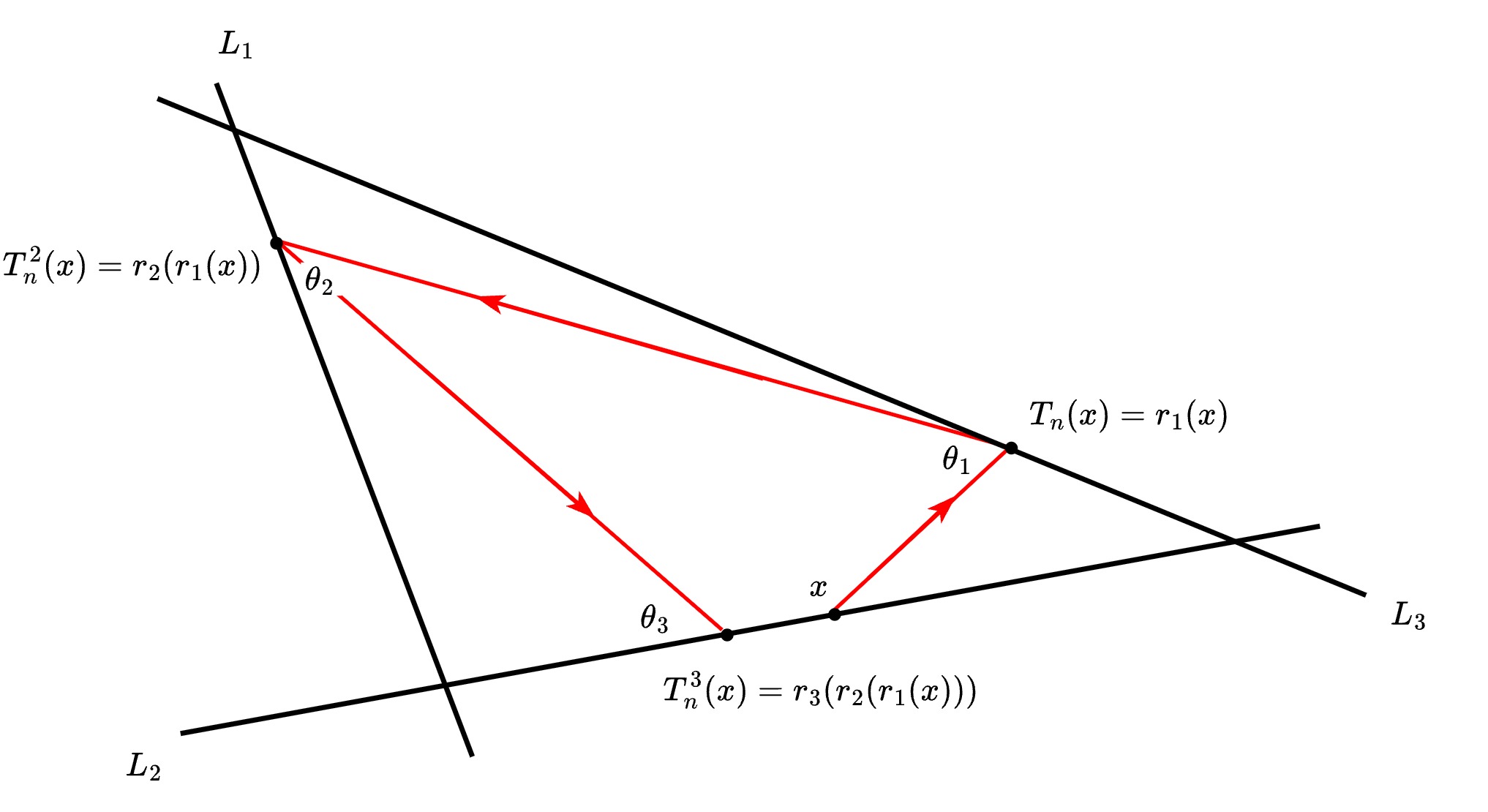}
    \caption{Example of iterating a cycling projection map $T_n$ in a space $X_3$, whose first three projection rules $r_1,r_2,r_3$ in its defining rule sequence have projection angles $\theta_1,\theta_2,\theta_3$, orientation values $1, 1, 0$, and project onto lines $L_3, L_1, L_2$, respectively.}
    \label{figExampleIteration}
\end{figure}

\subsection{Asymptotic behavior of cycling projection maps}
Let $(X_m, T_n)$ label a dynamical system.
The \emph{orbit} $\cO_{T_n}(x)$ of a point $x \in X_m$ with respect to $T_n$ is the set $\{x, T_n(x), T^2_n(x),\dots\}$.

A cycling projection map $T_n:X_m\rightarrow X_m$ with defining rule sequence $\{r_i\}_{i=1}^n$ is called \emph{redundant} if there exists a $k<n$ and cycling projection map $T'_k:X_m\rightarrow X_m$ with defining rule sequence $\{r'_i\}_{i=1}^k$ such that $\cO_{T_n}(x) = \cO_{T'_k}(x)$ for all $x \in X_m$. We shall assume moving forward that the cycling projection maps considered are not redundant.

\begin{remark}\label{rem:fixedLineOrder}
Let $T_n:X_m\rightarrow X_m$ be a cycling projection map. Each projection rule in its defining rule sequence always projects onto the same line. As a consequence, after the first iteration of the map, the itinerary (by line visited) of any point in $X_m$ under iteration of the map will cycle between a subset of the lines composing $X_m$ in the same order.
\end{remark}

Projection rules, when restricted to mapping from one line to another, are similitudes.
More precisely, let $L_1, L_2 \subset \RR^2$ be two lines, let $d$ be the Euclidean metric, and let $r\coloneqq r(o, \theta, L_2)$ be a projection rule. If $L_1,L_2$ are parallel, it is immediate that $d(r(x), r(y)) = d(x, y)$ for all $x, y \in L_1$.

If $L_1$ and $L_2$ intersect, an elementary law of sines argument (see \cite{everett2025geometric}) establishes the following facts:
\begin{enumerate}
\item[(i)] For all distinct $x, y \in L_1$, $d(r(x), r(y)) = cd(x, y)$ for some $c>0$.
\item[(ii)] For a fixed orientation $o \in \{0,1\}$, there are at most two values  $\alpha_1,\alpha_2 \in (0,\pi/2)$, $0<\alpha_1<\alpha_2<\pi/2$, such that for all values $\theta$ in one of the intervals $(0,\alpha_1)$, $(\alpha_1,\alpha_2)$, $(\alpha_2, \pi/2)$, it holds that either $c<1$ or $c>1$, and $c=1$ iff $\theta = \alpha_i$, $i=1,2$.
\end{enumerate}

We call the constant $c$ associated to the projection rule $r$ a \emph{similarity coefficient}.
Notice the value of $c$ depends on both the projection angle $\theta$ and the least angle between the lines $L_1$ and $L_2$.

As noted in \cref{rem:fixedLineOrder}, after the first iteration of a cycling projection map $T_n$, each projection rule in the defining rule sequence of $T_n$ will always map between the same two lines in $X_m$. Hence, iteration of a cycling projection map is just a cycling composition of similitudes.

Define $\hat{T}_n \coloneqq T^n_n$ to be the induced map over a line $L\subset X_m$, so that $\hat{T}_n^k = T^{kn}_n$. If the rules in the defining projection rule sequence for $T_n$ have associated similarity coefficients $c_1,\dots,c_n$, then let $C = c_1c_2\cdots c_n >0$ denote the similarity coefficient for the induced map $\hat{T}_n$.

\begin{lemma}[\cite{everett2025geometric}]\label{lem:contraction}
Let $T_n:X_m\rightarrow X_m$ be a cycling projection map and $\hat{T}_n$ its associated induced map with similarity coefficient $C$.
If $C <1$, then $T_n$ admits a unique, globally attracting periodic orbit of prime period $n$.
\end{lemma}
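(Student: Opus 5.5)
We prove \cref{lem:contraction} by reducing it to the Banach fixed point theorem for the induced map $\hat{T}_n$ on a single line. By \cref{rem:fixedLineOrder}, after the first iteration every point of $X_m$ lies on the line $L_{a_n}$, the target of $r_n$. From then on, the orbit visits the lines $L_{a_1}, L_{a_2}, \dots, L_{a_n}$ in this fixed cyclic order. Set $L := L_{a_n}$. Each rule $r_i$, restricted to a map $L_{a_{i-1}} \to L_{a_i}$, is a similitude with coefficient $c_i$. Hence the composition $\hat{T}_n = r_n \circ \cdots \circ r_1$ restricted to $L$ satisfies $d(\hat{T}_n(x), \hat{T}_n(y)) = C\, d(x,y)$ for all $x, y \in L$. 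For parallel consecutive lines we use $c_i = 1$, and the product formula still holds. There is also a degenerate case in which $r_i$ is applied to a point already on $L_{a_i}$; this is excluded because consecutive rules project onto distinct lines.

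\textbf{Step 1 (fixed point on $L$).} $L$ is a complete metric space isometric to $\RR$, and $\hat{T}_n|_L$ is a contraction with constant $C<1$. By Banach's theorem there is a unique fixed point $x^* \in L$, and $\hat{T}_n^k(y) \to x^*$ for every $y \in L$, at rate $C^k d(y,x^*)$.

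\textbf{Step 2 (periodic orbit and global attraction).} The points $x^*, T_n(x^*), \dots, T_n^{n-1}(x^*)$ form a periodic orbit whose period divides $n$. Now take any $x \in X_m$. After at most $n$ steps its orbit reaches $L$, at a point $y$. Then $T_n^{kn}(y) \to x^*$. Each intermediate map $r_i$ is Lipschitz (a similitude), so $T_n^{kn+j}(y) \to T_n^j(x^*)$ for each $j$. Hence the orbit of $x^*$ attracts every orbit. Uniqueness follows the same way. Any other periodic orbit meets $L$ at a point fixed by some power $\hat{T}_n^k$, which is a contraction with constant $C^k<1$. That fixed point must therefore be $x^*$.

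\textbf{Step 3 (prime period exactly $n$).} This is the main obstacle, and it is where non-redundancy enters. Suppose the orbit of $x^*$ had prime period $p<n$, with $p \mid n$. The itinerary by lines is periodic with period $n$, but the points themselves would repeat after $p$ steps. I would try to derive redundancy from this. Since every orbit converges to the orbit of $x^*$ and the rules are affine on each line, a period-$p$ attractor suggests the rule sequence is a $n/p$-fold repetition of a shorter block $\{r_i\}_{i=1}^p$. Indeed, the point $T_n^p(x^*) = x^*$ lies on $L_{a_p}$ and on $L_{a_n}$. If these lines differ, $x^*$ is their intersection point. Each rule fixes points already on its target line, so I would use this fixing property to show the orbits of $T_n$ and of the shorter map $T'_p$ coincide, contradicting non-redundancy.

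I am least sure of this step. If the direct argument fails, the fallback is to read ``prime period $n$'' with respect to the cycling index: the orbit visits $n$ rule-positions before its state $(\text{point}, \text{rule index})$ repeats, which is immediate from Step 1. I would then note that the standing non-redundancy assumption is exactly what is needed to rule out smaller point-periods.
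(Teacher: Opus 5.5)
Your Steps 1 and 2 are correct and are exactly the paper's proof. The paper applies Banach's theorem to $\hat T_n$ and reads off a unique attracting periodic orbit; you have only written out the attraction and uniqueness details.

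The gap is Step 3. Non-redundancy does not imply that the point sequence $x_j=T_n^j(x^*)$ has minimal period $n$, so no argument along your lines can succeed. Your heuristic that ``a period-$p$ attractor forces an $n/p$-fold repetition of the rules'' holds only when no orbit point lies on an intersection of target lines. It fails in two ways.
First, if $x_{j-1}\in L_{a_j}$, the rule $r_j$ fixes $x_{j-1}$ whatever its angle and orientation, so the orbit places no constraint on $r_j$. This is the ``degenerate case'' you dismissed at the start. Distinct consecutive targets do not exclude it, since $x_{j-1}$ may be $L_{a_{j-1}}\cap L_{a_j}$. It is harmless for Steps 1--2, because the similitude fixes that point anyway.
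Second, if $x_{j-1}\neq x_j$, the equality $x_j=x_{j+p}$ only forces $r_j$ and $r_{j+p}$ to share a projection direction, not a target line, since $x_j$ may be $L_{a_j}\cap L_{a_{j+p}}$.
Your own observation that $x^*$ is the intersection of $L_{a_p}$ and $L_{a_n}$ when these differ is exactly this bad configuration, not a route to a contradiction. More fundamentally, redundancy is a statement about all orbits, and a single periodic orbit does not control it.

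Here is a concrete counterexample. Take three nonconcurrent lines and $n=4$, with rules projecting onto $L_1,L_2,L_1,L_2$, $r_3\neq r_1$, and angles chosen so that $C<1$. Every rule fixes $O=L_1\cap L_2$, so $\hat T_4$ is a similitude of $L_2$ centered at $O$. The attracting orbit is therefore the constant orbit at $O$, of prime period $1$.
The map is nevertheless not redundant. The definitions force a shorter cycling map to have $k=3$ rules with three distinct target lines, so all its orbits meet $L_3$. By contrast, the $T_4$-orbit of a generic point of $L_1$ is an infinite subset of $(L_1\cup L_2)\setminus L_3$.

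So drop the direct argument and keep your fallback. The Banach argument yields a unique, globally attracting orbit whose point-period divides $n$, i.e.\ period exactly $n$ as a cycle of (point, rule index) states. That is all the paper's one-line proof establishes, since it never addresses minimality. It is also all that is used later: the proof of the main theorem invokes only the uniqueness of the fixed point of $\hat T_n$.
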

\begin{proof}
The assertion is an immediate consequence of the Banach fixed point theorem: if $C<1$ then $\hat{T}_n$ admits a unique (globally attracting) fixed point, so $T_n$ admits a unique attracting periodic orbit.
\end{proof}

See \cref{figNumericalSim} for a numerical simulation demonstrating the result.
Notice the orbit of a cycling projection map generates a polygonal path contained in $\RR^2$ when the consecutive points of the orbit are joined by a line segment.

\begin{figure}
    \centering
    \includegraphics[scale=0.09]{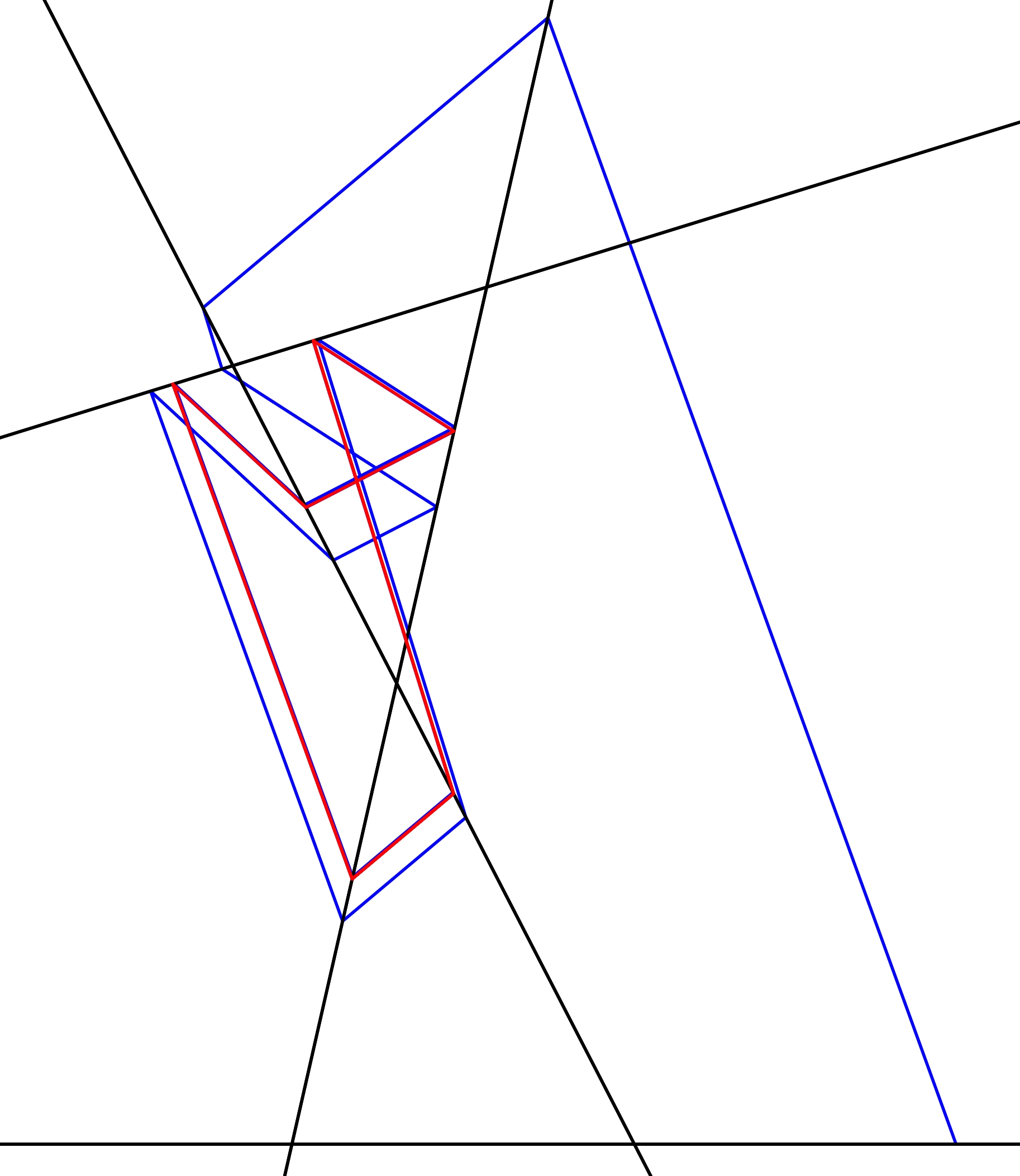}
    \caption{A numerical demonstration of how iteration of a cycling projection map with six defining rules converges to a periodic orbit. The blue line segments link the points of the orbit, and the red line segments link the periodic points.}
    \label{figNumericalSim}
\end{figure}

\begin{lemma}\label{lem:inducedIsContinuous}
Let $T_n:X_m\rightarrow X_m$ be a cycling projection map with defining rule sequence $\{r_i\}_{i=1}^n$ whose angle projection parameters are $\overline{\theta} = (\theta_1,\dots,\theta_n)$. Let
\[
\hat{T}_{n,\overline{\theta}}:L\rightarrow L
\]
denote its associated induced map of a line $L \subset X_m$.
The function
\[
F:(0,\pi/2)^n \times L \rightarrow L
\]
defined so that $F(\overline{\theta}, x) = \hat{T}_{n,\overline{\theta}}(x)$, is jointly continuous on $(0,\pi/2)^n \times L$.
\end{lemma}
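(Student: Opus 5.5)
The induced map $\hat{T}_{n,\overline{\theta}}$ is a composition of $n$ single projection rules, each mapping one line of $X_m$ to the next line in the fixed cyclic itinerary of \cref{rem:fixedLineOrder}. Compositions of jointly continuous maps are jointly continuous, so I will reduce the lemma to continuity of one projection rule in the pair $(\theta, x)$. Concretely, write $\hat{T}_{n,\overline{\theta}} = r_{n,\theta_n}\circ\cdots\circ r_{1,\theta_1}$ restricted to $L = L_{a_0} \to L_{a_1}\to\cdots\to L_{a_n}=L$. Define $G_i:(0,\pi/2)\times L_{a_{i-1}}\to L_{a_i}$ by $G_i(\theta,x)=r_i(o_i,\theta,L_{a_i})(x)$. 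Then
\[
F(\overline{\theta},x)=G_n\bigl(\theta_n, G_{n-1}(\theta_{n-1},\dots G_1(\theta_1,x)\dots)\bigr).
\]
By induction on $i$, the map $(\theta_1,\dots,\theta_i,x)\mapsto G_i(\theta_i,\cdots)$ is continuous, provided each $G_i$ is jointly continuous. Consecutive rules project onto distinct lines, so each $G_i$ maps between two distinct lines.

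To prove continuity of $G_i$, I will use explicit coordinates. If $L_{a_{i-1}}$ and $L_{a_i}$ coincide this case is excluded. If $x\in L_{a_i}$ the map is the identity, which only matters when the lines meet at one point. Parametrize $L_{a_i}$ as $q+t u$ with $u$ a unit vector. The lines through $x$ meeting $L_{a_i}$ at acute angle $\theta$ have directions $R_{\pm\theta}u$, where $R$ denotes rotation. The intersection parameter solves a $2\times2$ linear system $x+sR_{\pm\theta}u = q+tu$. By Cramer's rule,
\[
t = \langle x-q,u\rangle \mp \cot\theta\,\langle x-q,u^\perp\rangle
\]
up to sign conventions. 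This is jointly continuous (indeed smooth) in $(\theta,x)$ on $(0,\pi/2)\times\RR^2$, since $\sin\theta\neq 0$ there. It also gives $t=\langle x-q,u\rangle$ when $x\in L_{a_i}$, consistent with the convention $r(x)=x$.

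The main obstacle is the orientation convention. The choice of which sign $\pm$ is ``orientation $o$'' is defined geometrically, via clockwise sweep from $x$, and I must check that this choice is locally constant in $(\theta,x)$. Otherwise $G_i$ could jump between $p_0$ and $p_1$. I expect the label to be determined by the sign of $\langle x-q,u^\perp\rangle$, that is, by which side of $L_{a_i}$ the point $x$ lies on. So the selected sign is constant on each open half-line of $L_{a_{i-1}}\setminus L_{a_i}$. At the crossing point both formulas agree, because the $\cot\theta$ term vanishes, so the two branches glue continuously. I will verify this by noting that the oriented choice gives $t = \langle x-q,u\rangle + \epsilon_o\cot\theta\,|\langle x-q,u^\perp\rangle|$ for a fixed sign $\epsilon_o$. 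This expression is manifestly continuous. With that verified, each $G_i$ is continuous, and the composition argument finishes the proof. For parallel lines the formula is the same with $\langle x-q,u^\perp\rangle$ constant and nonzero.
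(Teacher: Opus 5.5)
Your proposal is correct and follows the paper's route: joint continuity of a single projection rule in $(\theta,x)$, then composition along the fixed itinerary. You also supply the explicit Cramer's-rule formula that the paper simply calls immediate.

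One correction concerns the orientation convention. It is rotational about $x$ (left/right as seen from $x$, clockwise sweep), so it selects the same line direction, $R_{\theta}u$ or $R_{-\theta}u$, on both sides of $L_{a_i}$. The rule is therefore $t=\langle x-q,u\rangle+\epsilon_o\cot\theta\,\langle x-q,u^\perp\rangle$ with the \emph{signed} term. This is a parallel projection, affine in $x$, as the paper's similitude property requires. Your $|\langle x-q,u^\perp\rangle|$ version would instead be piecewise affine with a kink at $L_{a_{i-1}}\cap L_{a_i}$. Continuity holds either way, so the lemma's conclusion is unaffected.
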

\begin{proof}
For a fixed orientation $o$ and line $L \subset X_m$, when treating the projection angle $\theta$ as a variable, a projection rule $r:(0,\pi/2)\times X_m\rightarrow L$ is immediately seen to be jointly continuous over $(0,\pi/2)\times X_m$. So is their composition, and the assertion follows.
\end{proof}

\subsection{Parameter variation and fixed points}

The similarity coefficient $C$ of the induced map $\hat{T}_n$ equals the product $c_1c_2\cdots c_n$ of the associated similarity coefficients of the rules from the defining rule sequence.
The similarity coefficients of the rules, we have established, vary continuously with respect to the projection angle $\theta$ of their associated rule, once an orientation value $o \in \{0,1\}$ for the rule has been fixed.
Consequently, $C = C(\theta_1,\theta_2,\dots,\theta_n)$ is a continuous function of the parameters $(\theta_1,\dots,\theta_n) \in (0,\pi/2)^n\subset \RR^n$.

\begin{remark}\label{rem:lineRotation}
In the same way the behavior of a system $(X_m, T_n)$ is dependent on the projection angles of the rules defining $T_n$, adjusting the lines defining $X_m$ changes the similarity coefficients $c_1,\dots,c_n, C$.
\end{remark}

With this remark in mind, we have the following lemma; it is trivial, but worth stating.

\begin{lemma}\label{lem:lineRotation}
Let $T_n:X_m\rightarrow X_m$ be a cycling projection map, and let $\hat{T}_n:L\rightarrow L$ be the induced map over some line $L \subset X_m$, with similarity coefficient $C$.
Suppose $L_i \subset X_m$ is a line that exactly one rule defining $T_n$ projects onto.
If $L_i$ is rotated by a sufficiently small amount about any of its points, then under the new space $X_m'$ with the perturbed line, the updated similarity coefficient $C'$ of $\hat{T}_n$ no longer equals $C$.
\end{lemma}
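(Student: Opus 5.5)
Since $C=c_1c_2\cdots c_n$, the plan is to reduce everything to the factors that involve $L_i$. Let $k$ be the unique index with $a_k = i$, so that the rule $r_k$ projects onto $L_i$. Then $L_i$ appears in the line itinerary of \cref{rem:fixedLineOrder} in exactly two consecutive transitions: $L_{a_{k-1}}\to L_i$, through $r_k$ with coefficient $c_k$, and $L_i\to L_{a_{k+1}}$, through $r_{k+1}$ with coefficient $c_{k+1}$. All other coefficients $c_j$, $j\neq k,k+1$, describe similitudes between lines that are not moved. They depend only on $\theta_j$ and on the angle between those fixed lines, so they are unchanged by the rotation. Hence $C'/C = (c_k'c_{k+1}')/(c_kc_{k+1})$, and it suffices to show that the product $g(\varphi) := c_k c_{k+1}$ is a nonconstant function of the rotation angle $\varphi$ of $L_i$ on every small interval around $0$.

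To compute $g$, I use the law of sines. Let $\beta$ be the angle between $L_{a_{k-1}}$ and $L_i$, and $\gamma$ the angle between $L_i$ and $L_{a_{k+1}}$. In the triangle formed by $x$, $r(x)$ and the intersection point of the two lines, one gets $c = \sin(\text{angle at }x)/\sin\theta$. Here the angle at $x$ is $\pi-\theta-\beta$ or $\theta-\beta$ (up to sign), according to the orientation. Rotating $L_i$ by $\varphi$ about one of its points replaces $\beta$ by $\beta+\varphi$ and $\gamma$ by $\gamma-\varphi$ (with signs fixed by the orientation conventions). The parallel case contributes $c=1$ and can be treated as the degenerate limit. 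This gives
\[
g(\varphi)=\frac{\sin(\theta_k \pm(\beta+\varphi))\,\sin(\theta_{k+1}\pm(\gamma-\varphi))}{\sin\theta_k\sin\theta_{k+1}},
\]
with sign choices dictated by $o_k,o_{k+1}$. Being a product of sines of affine functions of $\varphi$, $g$ is real-analytic in $\varphi$. So either $g$ is constant or $g(\varphi)\neq g(0)$ for all sufficiently small $\varphi\neq 0$. The rotation point is irrelevant, since similarity coefficients depend only on angles.

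The main obstacle is excluding the constant case. Expanding with product-to-sum, $g$ is a constant plus a multiple of $\cos(A+B)$, where $A$ and $B$ are the two sine arguments. If the $\varphi$-coefficients of $A$ and $B$ are opposite, $A+B$ is independent of $\varphi$. In that situation only a term $\cos(A-B)$ survives, which does depend on $\varphi$ with frequency $2$. So in every sign configuration exactly one of $\cos(A+B)$ and $\cos(A-B)$ varies with $\varphi$, and it enters $g$ with a nonzero coefficient $\pm\tfrac12$. Therefore $g$ is never constant.

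I will double-check this with the orientation conventions, in particular that the rotation cannot cancel between the incoming and outgoing angles. I will also verify nondegeneracy at $\varphi=0$: the derivative of $g$ may vanish there, but analyticity still gives $g(\varphi)\ne g(0)$ for small $\varphi\neq0$, which is all the statement needs. One caveat is that if $a_{k-1}=a_{k+1}$, the two angles refer to the same pair of lines. The same computation applies verbatim, since only the trigonometric form is used.
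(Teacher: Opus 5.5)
Your proposal is correct, and it takes a more careful route than the paper. The paper's proof is one line: rotating $L_i$ perturbs the coefficient of the single rule projecting onto $L_i$, and since $C=c_1\cdots c_n$, changing one factor changes $C$. That argument overlooks exactly what you flag as the main obstacle. The next rule $r_{k+1}$ projects \emph{from} $L_i$, so $c_{k+1}$ also moves; the paper itself notes that $c$ depends on the angle between source and target lines. A priori the two changes could cancel.

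Your reduction $C'/C=c_k'c_{k+1}'/(c_kc_{k+1})$ is what rules this out. Combined with $\sin A\sin B=\tfrac12[\cos(A-B)-\cos(A+B)]$ and the fact that the $\varphi$-coefficients of $A$ and $B$ are $\pm1$, it shows the product is nonconstant. Analyticity then gives the stronger conclusion that \emph{every} sufficiently small nonzero rotation changes $C$. The paper's version buys only brevity.

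Some minor tidying:
\begin{itemize}
\item Write $c=|\sin(\cdot)|/\sin\theta$. Since $c_kc_{k+1}>0$ at $\varphi=0$, the sign is locally constant, so analyticity is unaffected.
\item Your intermediate claim that $g$ is ``a constant plus a multiple of $\cos(A+B)$'' holds in only one sign configuration. The correct general statement is the one you reach in the next sentence.
\item The planned check of orientation conventions is unnecessary, since your argument already covers all sign patterns.
\end{itemize}
If you want the signs exact: under the left/right convention, a rule restricted to two lines is a parallel projection along direction $\alpha_{a_j}\pm\theta_j$, where $\alpha$ denotes line direction. With directed angles one then gets $A=A_0+\varphi$ and $B=B_0-\varphi$ exactly. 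The parallel case is also just the value $\beta=0$ of the same formula, not a limit.
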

\begin{proof}
The rotation of a line $L_i \subset X_m$ must perturb the similarity coefficient $c_i$ of the single rule projecting onto $L_i$. But $C=c_1\cdots c_n$, so if one of the $c_i$ changes, so does $C$.
\end{proof}

\begin{lemma}\label{lem:contFP}
Let $P = (0,A)^n\subset \RR^n$. Suppose $f:P\times \RR\rightarrow \RR$ is a jointly continuous function such that for each $p \in P$, $x\mapsto f(p, x)$ is a contraction on $\RR$. Then the unique fixed point $x^*(p) \in \RR$ is continuous with respect to $p$ on $P$.
\end{lemma}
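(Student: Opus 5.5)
The plan is to prove continuity at an arbitrary point $p_0 \in P$ by a one-dimensional sign argument. This avoids needing a contraction constant that is uniform in $p$, which the hypotheses do not provide. The naive estimate
\[
|x^*(p)-x^*(p_0)| \le k(p)\,|x^*(p)-x^*(p_0)| + |f(p,x^*(p_0)) - f(p_0,x^*(p_0))|
\]
only yields continuity if the contraction constants $k(p)$ stay bounded away from $1$ near $p_0$. Since that is not assumed, I would work instead with the displacement function
\[
g(p,x) \coloneqq x - f(p,x),
\]
whose zeros are exactly the fixed points.

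First I would show that for each fixed $p$, the map $x\mapsto g(p,x)$ is strictly increasing on $\RR$. If $f(p,\cdot)$ has contraction constant $k(p)<1$ and $x<y$, then
\[
g(p,y)-g(p,x) \ge (y-x) - |f(p,y)-f(p,x)| \ge (1-k(p))(y-x) > 0.
\]
Combined with the existence and uniqueness of the fixed point $x^*(p)$ (Banach, as in \cref{lem:contraction}), this gives the sign pattern $g(p,x)<0$ for $x<x^*(p)$ and $g(p,x)>0$ for $x>x^*(p)$.

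Next, fix $p_0\in P$, write $x_0 = x^*(p_0)$, and let $\varepsilon>0$. By the previous step, $g(p_0,x_0+\varepsilon)>0$ and $g(p_0,x_0-\varepsilon)<0$. The function $g$ is jointly continuous because $f$ is, so in particular $p\mapsto g(p,x_0\pm\varepsilon)$ is continuous. Since $P$ is open, there is a neighborhood $V\subset P$ of $p_0$ on which $g(p,x_0+\varepsilon)>0$ and $g(p,x_0-\varepsilon)<0$ both still hold.

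For $p\in V$, the intermediate value theorem applied to the continuous function $g(p,\cdot)$ on $[x_0-\varepsilon,x_0+\varepsilon]$ produces a zero in the open interval $(x_0-\varepsilon,x_0+\varepsilon)$. By uniqueness this zero is $x^*(p)$, so $|x^*(p)-x^*(p_0)|<\varepsilon$ on $V$, which is continuity at $p_0$.

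The only delicate point is the non-uniformity of the contraction constant, and the monotonicity and sign argument is exactly what removes it. This argument relies on the domain being $\RR$ (one-dimensional); that suffices here, since \cref{lem:inducedIsContinuous} places us in the setting of the induced map on a line $L\cong\RR$.
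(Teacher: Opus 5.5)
Your proof is correct and follows essentially the same route as the paper. The paper works with $G(p,x)=f(p,x)-x$, the negative of your $g$, shows it is strictly decreasing in $x$, and then runs the same argument: signs at $x_0\pm\varepsilon$, joint continuity to a neighborhood of $p_0$, the intermediate value theorem, and uniqueness of the fixed point.
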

\begin{proof}
Let $p \in P$ be arbitrary. The function $f_p: \mathbb{R} \to \mathbb{R}$ defined by $f_p(x) = f(p, x)$ is a jointly continuous contraction mapping on the complete metric space $\mathbb{R}$, with a contraction constant $L=L(p) < 1$. By the contraction mapping theorem, there is a unique $x^*(p) \in \mathbb{R}$ such that $f(p, x^*(p)) = x^*(p)$. So, the mapping $x^*: P \to \mathbb{R}$ is well-defined.
Define $G:P\times\mathbb R\to\mathbb R$ by $G(p,x):=f(p,x)-x$. Then $G$ is also jointly continuous.

Take $p\in(0,1)^n$. If $x>y$, and $L(p)\in[0,1)$ is a contraction constant for $f_p$, then
\begin{align*}
G(p,x)-G(p,y)
&=\bigl(f(p,x)-f(p,y)\bigr)-(x-y)\\
&\le |f(p,x)-f(p,y)|-(x-y)\\
&\le (L(p)-1)(x-y)\\
&<0,
\end{align*}
so the function $x\mapsto G(p,x)$ is strictly decreasing and continuous. In particular, since $G(p,x^*(p))=0$ and the zero is unique, it follows that $G(p,x)>0$ for $x<x^*(p)$, and $G(p,x)<0$ for $x>x^*(p)$.

To prove continuity of $x^*$, fix $p_0\in P$ and set $x_0=x^*(p_0)$. Given $\varepsilon>0$, let $a = x_0-\varepsilon$ and $b=x_0+\varepsilon$. By the foregoing sign characterization at $p_0$, we have $G(p_0,a)>0$ and $G(p_0,b)<0$. By joint continuity of $G$, there exists $\delta>0$ such that $\|p-p_0\|<\delta$ implies $G(p,a)>0$ and $G(p,b)<0$.

For such $p$, continuity of $x\mapsto G(p,x)$ and the intermediate value theorem yields the existence of some $x\in(a,b)$ with $G(p,x)=0$. Uniqueness of the zero (equivalently, uniqueness of the fixed point of the contraction $f_p$) forces $x=x^*(p)$, hence $|x^*(p)-x_0|<\varepsilon$. Since $\varepsilon>0$ was arbitrary, $x^*$ is continuous at $p_0$, and because $p_0$ was arbitrary, the map $p\mapsto x^*(p)$ is continuous on $P$.
\end{proof}

\section{Proof of \cref{thm:main}}\label{secProofs}

We recall some terminology.
A \emph{billiard trajectory} $\Gamma$ in a simply connected $m$-gon $Q \in \cM_m$ is a polygonal path (generally infinite) composed of line segments $\{l_i\} \subset Q$ so that each vertex of the path $l_i \cap l_{i+1}$ lies in the interior of some edge of $Q$. A billiard trajectory $\Gamma$ is \emph{periodic} if it closes up.
Let $\partial Q$ denote the boundary (edges) of a polygon $Q \in \cM_m$.

The \emph{billiard map} $S$ is the first return map of the billiard flow to the boundary $\partial Q$. The phase space of the billiard map is the set of inward-pointing unit vectors with \emph{foot point} $q$ in $\partial Q$. The direction $\theta$ of a vector $z = (q, \theta)$ will refer to the angle the vector makes with the clockwise direction of the boundary.

\subsection{Proof of \cref{thm:main}}

Before stating the proof of \cref{thm:main}, we give some intuition.

In the previous section, we demonstrated that whenever a cycling projection map $T_n$ has an induced map with similarity coefficient $C<1$, then trajectories are asymptotically stable, converging to a periodic orbit. In particular, if $C<1$, then under sufficiently small perturbations to the lines defining the underlying space $X_m$, or the angle projection parameters $\theta_i$ of the defining rules $r_i$, it continues to be the case that $C<1$ and orbits are asymptotically periodic. Furthermore, when $C<1$, small variation in either the lines defining $X_m$ or rule projection angles $\theta_i$ also slightly varies the associated periodic orbit (\cref{lem:contFP}).

The strategy of the following proof is to exploit this stability.
The proof strategy is perhaps best communicated in a picture, so we suggest the reader consult \cref{figProofDiagram} before continuing to the proof to gain intuition.

\begin{figure}
    \centering
    \includegraphics[scale=0.315]{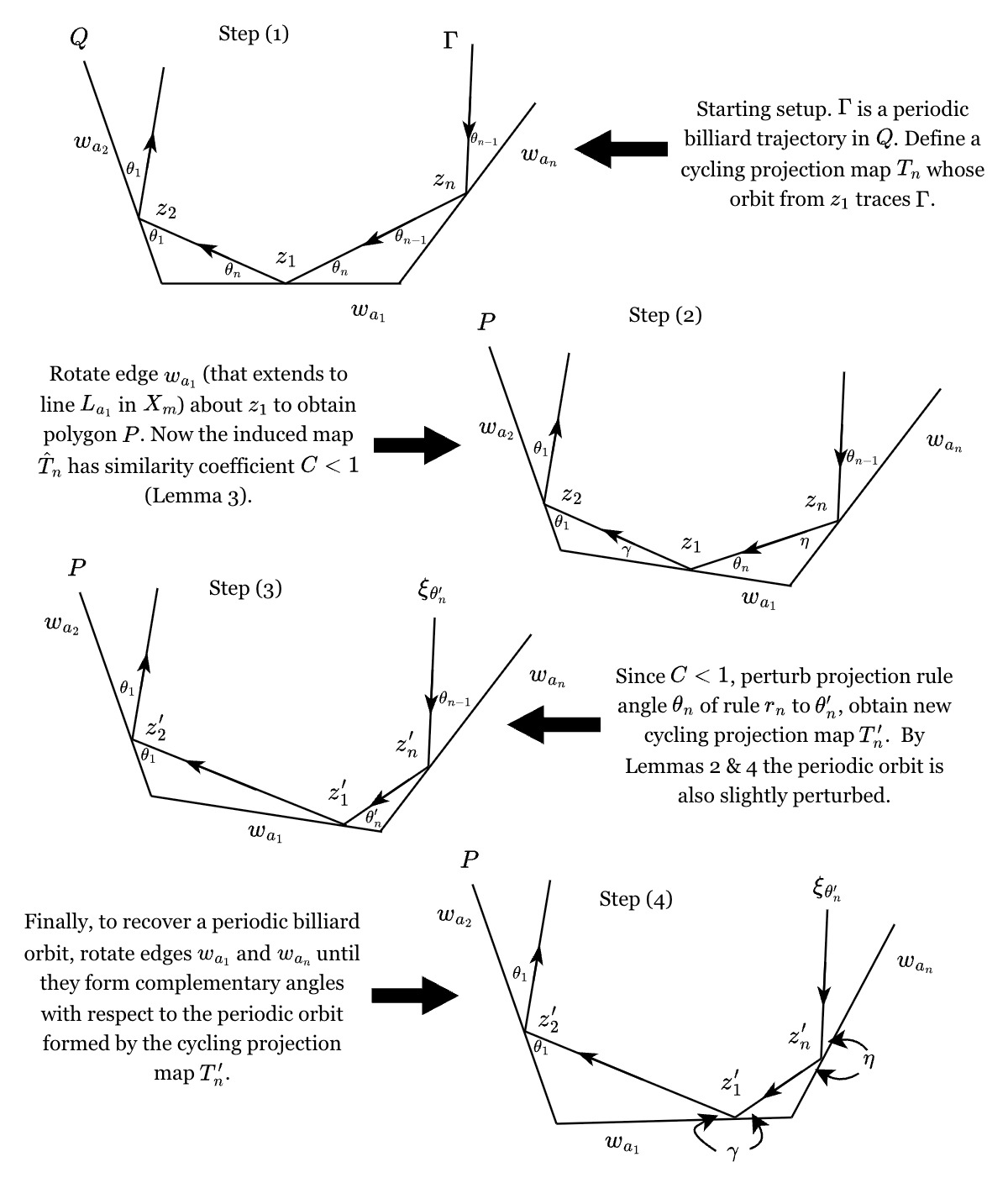}
    \caption{}
    \label{figProofDiagram}
\end{figure}

\begin{proof}[Proof of \cref{thm:main}]
Let $Q \in \cM_m$ be a polygon admitting a periodic billiard trajectory $\Gamma$ of orbit type $w = w_{a_1}\cdots w_{a_n}$.
Suppose there is an $i=1,\dots, n$ such that edges $w_{a_i}$ and $w_{a_{i+1}}$ only appear once in $W$ (using the convention $w_{a_{m+1}} = w_{a_1}$). Notice this assumption rules out ``perpendicular" periodic trajectories.
Let $S$ denote the billiard map. Put $z_{t+1} = S^t(z_1)$, $t=1,\dots,n-1$, and $z_1 = S^{m}(z_1)$; these are the periodic points of the billiard trajectory $\Gamma$.

Without any loss of generality, let $z_1 \in w_{a_1}$ be the only periodic point (vertex) of $\Gamma$ contained in edge $w_{a_1}$, and $z_n \in w_{a_n}$ the only periodic point of $\Gamma$ contained in edge $w_{a_n}$. Then there is no $k$, $1\leq k \leq n-2$, such that $S^k(z_1) \in w_{a_1}$ or $S^k(z_1) \in w_{a_n}$ (by hypothesis).

Extend the line segments composing the edges of $Q$ to obtain $m$ lines $L_{w_i}$ which compose a space $X_m$. Then $\partial Q \subset X_m$. 
Define a cycling projection map $T_n:X_m\rightarrow X_m$ with defining rule sequence $\{r_i\}_{i=1}^n$ such that $r_{i}(z_{i}) = z_{i+1}$, $i=1,\dots, n-1$, and $r_n(z_{n}) = z_1$.
With $z_1 \in w_{a_1}$, let $\hat{T}_n:L_{w_{a_1}}\rightarrow L_{w_{a_1}}$ be the associated induced map along line $L_{w_{a_1}}$.

If $C$ is the similarity constant for $\hat{T}_n$, then, as constructed, $C=1$. This follows from the fact that polygonal billiard systems are conservative (see \cite{everett2025geometric} for discussion).
Using \cref{lem:lineRotation}, rotate line $L_{w_{a_1}}$ about point $z_1$ by a sufficiently small amount so that $C<1$. We may perform such a rotation because (i) $z_1$ is the only periodic point contained on the edge $w_{a_1}$, and (ii) any periodic point (vertex) of $\Gamma$ must lie in the interior of the edges adjacent to $w_{a_1}$, so a sufficiently small rotation will not destroy the vertices of the periodic orbit $\Gamma$.

Let $P$ label the polygon resulting from this perturbation of $Q$. Carry over the notation for edges and periodic points from $Q$ to $P$ in the natural way.

By construction, $z_1 \in L_{w_{a_1}}$ is a fixed point for $\hat{T}_n$. Moreover, by \cref{lem:contraction} this fixed point is unique.
Treating the projection rule angles $\theta_1,\dots,\theta_n$ as variables, \cref{lem:inducedIsContinuous} ensures the function
\[
F:(0,\pi/2)^n \times L_{w_1} \rightarrow L_{w_1}
\]
defined so that $F(\overline{\theta}, x) = \hat{T}_{n,\overline{\theta}}(x)$, is jointly continuous on $(0,\pi/2)^n \times L_{w_1}$.
In addition, \cref{lem:contFP} states the fixed point varies continuously with small variation of the angle projection parameters $\theta_1,\dots,\theta_n$.
Suppose the angle projection parameter of rule $r_n$ in the defining rule sequence of $T_n$, for which $r_n(z_{n}) = z_1$, is $\theta_n>0$.

Then, collecting the above facts, there is an $\epsilon>0$ such that for every $\theta_n' \in (\theta_n-\epsilon, \theta_n + \epsilon)$, $\hat{T}_n'$ has a fixed point in a neighborhood of $z_1$, where $\hat{T}'_n$ is the induced map of $T'_n$ --- identical to $T_n$ but for the last rule $r_n$ whose projection angle is $\theta_n'$.
In particular, by \cref{lem:contFP} this fixed point varies continuously with $\theta_n' \in (\theta_n - \epsilon, \theta_n + \epsilon)$.

Hence, corresponding to each $\theta_n' \in (\theta_n - \epsilon, \theta_n + \epsilon)$ we obtain a closed curve $\xi_{\theta_n'} \subset P$ by joining consecutive updated periodic points $z_{i}'$ (periodic points of the cycling projection map $T_n'$), $i=1,\dots,n$ with line segments.
Taking $\epsilon$ sufficiently small, we are guaranteed that $\xi_{\theta_n'} \subset P$ even if $P$ is not convex, since the original trajectory $\Gamma$ is not incident to any vertices of $Q$.

Only the pairs of consecutive line segments $\overline{z_2'z_1'}$, $\overline{z_1'z_n'}$, and $\overline{z_1'z_n'}$, $\overline{z_n'z_{n-1}'}$ do not form complementary angles with respect to edges $w_{a_1}$ and $w_{a_n}$, respectively. The rest of the closed curve $\xi_{\theta_n'}$ does satisfy the mirror law of reflection with respect to $P$, because the remaining projection angles of the cycling projection map have remained fixed, and no other edge of $Q$ has been changed to obtain $P$ other than $w_{a_1}$.

Hence, for each $\theta_n'$, we obtain a polygon $E$ admitting a periodic billiard trajectory of orbit type $w$ as follows. Rotate the edges $w_{a_1}$ and $w_{a_n}$ so that the line segments $\overline{z_2'z_1'}$, $\overline{z_1'z_n'}$, and $\overline{z_1'z_n'}$, $\overline{z_n'z_{n-1}'}$ now form complementary angles $\gamma$ and $\eta$ with respect to the edges $w_{a_1}$ and $w_{a_n}$, respectively. This operation recovers a periodic billiard trajectory in a polygon $E$ nearby $P$ in parameter space. Recall this rotation operation is possible because there remains only one periodic point on the edges $w_{a_1}$ and $w_{a_n}$.

The family of closed curves $\xi_{\theta_n'}$ is parameterized by a single real value $\theta_n' \in (\theta_n - \epsilon, \theta_n + \epsilon)$, and these closed curves have identical itinerary to $\Gamma$ with respect to the edges of the relevant polygon (taking the same edge labels). Furthermore, we have seen that for each of these curves $\xi_{\theta_n'}$ we may transform $P$ to recover a periodic billiard trajectory. In addition, the perturbation to $Q$ from which we recovered $P$ may be arbitrarily small.

Collecting these observations, we find that for every neighborhood $U \subset \cM_m$ of $Q$, there are distinct polygons $E \in U$ and $E' \in \cM_m$ forming the endpoints of a path $\alpha:[0,1]\rightarrow \cM_m$, such that every polygon $R \in \alpha\bigl((0,1)\bigr)\subset \cM_m$ admits a periodic billiard trajectory of orbit type $w$.
\end{proof}

Recall that a periodic billiard trajectory $\Gamma$ in $Q$ is called \emph{stable} if there is a neighborhood $U \subset \cM_m$ of $Q$ consisting of $m$-gons that have a periodic billiard orbit with the same orbit type as $\Gamma$.
A theorem of Vorobets, Gal'perin, and Stepin \cite{vorobets1992periodic} asserts that if a polygon $Q$ has interior angles $\alpha_1,\dots,\alpha_n$ that are rationally independent, i.e.
\[
(k_1,\dots,k_n \in \ZZ : k_1\alpha_1 + \cdots + k_n \alpha_n = 0) \implies (k_1=\cdots =k_n = 0),
\]
then every periodic trajectory $\Gamma$ in $Q$ is stable.

Although \cref{thm:main} produces families of irrational polygons, we cannot apply the foregoing result of Vorobets, Gal'perin, and Stepin to conclude the produced periodic trajectories in irrational polygons are stable.
The reason is that, in the proof, the edges $w_{a_1}$ and $w_{a_n}$ are rotated about points $z_1$ and $z_n$ simultaneously; we cannot thereby conclude that the interior angles of the polygons in the produced path are rationally independent.

\subsection*{Acknowledgements}
I would like to thank Sam Freedman for the many conversations that ultimately motivated me to write this paper.
This work was supported by the National Science Foundation Graduate Research Fellowship Program under Grant No. 2140001.

\bibliographystyle{alpha}
\bibliography{references}

\end{document}